\theoremstyle{plain}
\newtheorem*{thm}{Theorem}
\newtheorem{lemma}{Lemma}
\theoremstyle{definition}
\newtheorem{definition}{Definition}
\setlist{nolistsep}
\renewcommand{\k}{\Bbbk}
\newcommand{\Q}{\mathbb{Q}}
\newcommand{\G}{\mathfrak{G}}
\newcommand{\K}{\mathfrak{K}}
\renewcommand{\L}{\mathfrak{L}}
\newcommand{\bbar}[1]{\overline{ \overline{#1}}}
\newcommand{\Frac}{\operatorname{Frac}}\newcommand{\A}{\mathfrak{A}}\newcommand{\B}{\mathfrak{B}}
\newcommand{\M}{\mathfrak{M}}\newcommand{\tri}[6]{\left|\left|
\begin{array}{ccccccc}
1& #1& #2& #3& \cdot & \cdot & \cdot \\
&1 & #4& #5&  \cdot & \cdot & \cdot \\
&&1 & #6&  \cdot & \cdot & \cdot \\
&&&1 &  \cdot & \cdot & \cdot \\
&&&&  \cdot &\\
&&&&&  \cdot \\&&&&&&  \cdot \\\end{array}
 \right|\right|}
 \newcommand{\diag}[4]{\left|\left|
\begin{array}{ccccccc}
#1\\
&#2\\
&&#3\\
&&&#4\\
&&&&\cdot\\
&&&&&\cdot\\
&&&&&&\cdot\\
\end{array}
 \right|\right|}
\newcommand{\dia}[3]{\left|\left|
\begin{array}{ccccccc}
#1\\
&#2\\
&&#3\\
&&&\cdot\\
&&&&\cdot\\
&&&&&\cdot\\\end{array}
 \right|\right|}
\title{On the free product of ordered groups}
\author{A. A. Vinogradov\thanks{Published in \emph{Mat. Sb. (N.S.)}, 1949, Volume 25(67), Number 1, 163--168. Translated from Russian by Victoria Lebed and Arnaud Mortier.}}
\date{}
\begin{document}

\maketitle

One of the fundamental questions of the theory of ordered groups is what abstract groups are orderable. E. P. Shimbireva~\cite{Shim} showed that a free group on any set of generators can be ordered. This leads to the following problem: under what conditions is it possible to order a free product of arbitrary groups?

Using the matrix presentation method for groups proposed by Malcev~\cite{Mal}, in the present work we establish the orderability of a free product of arbitrary ordered groups.

\begin{definition}\label{def1}
An \emph{ordered group} is a group endowed with a relation $>$, satisfying the following conditions:
\begin{enumerate}
\item For any elements $x$ and $y$ of the group either $x>y$, or $y>x$, or $x=y$.
\item If $x>y$ and $y>z$, then $x>z$.
\item If $x>y$, then $axb>ayb$ for any elements $a$ and $b$ of the group.
\end{enumerate}
\end{definition}

\begin{definition}\label{def2}
An \emph{ordered ring (field)} is a ring (field) such that:
\begin{enumerate}
\item the additive group of the ring (field) is ordered, and
\item for any elements $a$, $x$, $y$ of the ring (field), \[(a>0\text{ and }x>y)\quad\Longrightarrow\quad (ax>ay\text{ and }xa>ya).\]
\end{enumerate}
\end{definition}

\begin{definition}
The \emph{group algebra} $\k \G$ of a group $\G$ over a field $\k$ is the algebra whose elements are  formal finite linear combinations of elements of $\G$ with coefficients in $\k$. These sums are multiplied and added in the usual way. A group algebra has the obvious unit $1e$, where $e$ is the identity element of $\G$ and $1$ the unit of $\k$.
\end{definition}

\begin{lemma}\label{lem1}
If $\k$ is an ordered field and $\G$ an ordered group, then $\k \G$ is orderable.
\end{lemma}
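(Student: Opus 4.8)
The plan is to equip $\k\G$ with a positive cone defined by the leading term with respect to the order on $\G$. For a nonzero element $\alpha = \sum_{g\in\G} a_g\, g$ of $\k\G$, its support $\{g : a_g \neq 0\}$ is a finite nonempty subset of $\G$, and since $\G$ is totally ordered this support has a largest element $g_0$. I will declare $\alpha > 0$ when the corresponding leading coefficient $a_{g_0}$ is positive in $\k$, and set $\alpha > \beta$ to mean $\alpha - \beta > 0$. To show that this makes $\k\G$ an ordered ring in the sense of Definition~\ref{def2}, it suffices to check that the set $P$ of positive elements satisfies trichotomy (for each $\alpha$, exactly one of $\alpha\in P$, $\alpha = 0$, $-\alpha\in P$), is closed under addition, and is closed under multiplication; translation invariance of the additive order then follows automatically, since $\alpha > \beta \iff \alpha - \beta \in P$.

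Trichotomy and closure under addition I expect to be immediate. If $\alpha \neq 0$ its leading coefficient is a nonzero element of $\k$, hence positive or negative but not both, and negating $\alpha$ negates that coefficient; this gives trichotomy. For the sum, if $\alpha, \beta \in P$ have leading terms at $g_0$ and $h_0$, I compare $g_0$ and $h_0$ in $\G$: when they differ the larger one remains the leading term of $\alpha+\beta$ with an unchanged positive coefficient, and when $g_0 = h_0$ the leading coefficient of $\alpha + \beta$ is a sum of two positive elements of $\k$, still positive (and in particular nonzero, so no cancellation occurs). In every case $\alpha + \beta \in P$.

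The crux is closure under multiplication, and this is where the order axioms on $\G$ do the real work. Given $\alpha,\beta\in P$ with leading terms $a_{g_0} g_0$ and $b_{h_0} h_0$ (so $a_{g_0}, b_{h_0} > 0$), I claim the leading term of the product $\alpha\beta$ is $a_{g_0} b_{h_0}\, g_0 h_0$. The key point is that for any $g \leq g_0$ in the support of $\alpha$ and any $h \leq h_0$ in the support of $\beta$ one has $gh \leq g_0 h_0$, with equality only when $g = g_0$ and $h = h_0$; this follows by applying condition (3) of Definition~\ref{def1} twice, namely $gh \leq g_0 h \leq g_0 h_0$, together with the strictness of one of these inequalities whenever $g < g_0$ or $h < h_0$. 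Consequently the product element $g_0 h_0$ arises from the single pair $(g_0, h_0)$, so its coefficient in $\alpha\beta$ is exactly $a_{g_0} b_{h_0}$ with no other contributions to cancel it; since $\k$ is an ordered field this coefficient is positive, and $g_0 h_0$ is the largest element of the support of $\alpha\beta$. Hence $\alpha\beta \in P$. The main obstacle to watch is precisely this no-cancellation argument: one must verify that the top group element is attained uniquely, which is exactly what the strict form of compatibility in an ordered group guarantees.
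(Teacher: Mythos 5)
Your proof is correct and uses essentially the same order as the paper: the paper's lexicographic comparison of coefficients along the decreasingly ordered group elements is exactly your condition that the leading coefficient of the difference be positive. The only difference is that you carry out the verification (notably the multiplicative step via bi-invariance of the group order, which guarantees $gh < g_0h_0$ unless $g=g_0$ and $h=h_0$) that the paper dismisses as "easy to check."
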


\begin{proof}
Let $A$ and $A^\prime$ be elements of $\k \G$ under the conditions of the lemma. Then they can be written as 
\[A=\sum_{i=1}^n \alpha_i a_i,  \qquad A^\prime=\sum_{i=1}^n \alpha^\prime_i a_i, \]
where some of the $\alpha_i$ and $\alpha^\prime_i$ might be zero, and $a_1> \ldots > a_n$.
We set $A>A^\prime$ if for some $r\in \left\lbrace 1, \ldots, n\right\rbrace$, \[ \alpha_1= \alpha^\prime_1,\quad \ldots,\quad \alpha_{r-1}= \alpha^\prime_{r-1},\quad \alpha_r> \alpha^\prime_r.  \]
It is easy to check that the conditions from Definition~\ref{def2} hold.
\end{proof}

We call a \textit{triangular matrix} any matrix, finite or infinite, with zeroes under the main diagonal.

\begin{lemma}\label{lem2}
The set of all triangular matrices with entries in an ordered unital ring, and with every element on the main diagonal positive and invertible, is an orderable group. \end{lemma}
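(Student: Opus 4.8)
The plan is to exhibit an explicit conjugation-invariant positive cone, since Definition~\ref{def1} asks for a two-sided order. First I would check that the set $T$ in question is genuinely a group. Triangularity makes every product well defined even in the infinite case: the $(i,j)$ entry of a product is a sum $\sum_{i\le k\le j}$ of finitely many terms. The diagonal entry of a product of triangular matrices is the product of the diagonal entries, so $M\mapsto \delta(M):=(\text{diagonal of }M)$ is a homomorphism from $T$ onto the group $D$ of diagonal matrices with positive invertible entries; these stay positive and invertible under multiplication and inversion (in an ordered ring a positive invertible element has positive inverse, as $aa^{-1}=1>0$ rules out $a^{-1}\le 0$). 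Inverses are built recursively and are again triangular with positive invertible diagonal. The kernel of $\delta$ is the subgroup $U$ of unipotent matrices (ones on the diagonal), and I shall assume the index set well-ordered (e.g. $\mathbb{N}$) so that the least index with a prescribed property always exists.

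Next I would order the two pieces and assemble them lexicographically along the exact sequence $1\to U\to T\xrightarrow{\;\delta\;} D\to 1$. For $D$, I order the multiplicative group of positive invertible ring elements by calling $g$ positive when $g>1$; positivity of $g-1$ is preserved under products and under conjugation by positive invertibles (both follow from the ordered-ring axiom $a>0,\ x>y\Rightarrow ax>ay$), giving a bi-order, and I then order $D$ itself by first difference reading the diagonal from the top. For $U$, I write $M=I+S$ with $S$ strictly upper triangular and read $S$ by superdiagonals: let $d$ be the least value of $j-i$ for which some entry $S_{i,i+d}$ is nonzero, and among those the least row $i_0$; declare $M$ positive when $S_{i_0,i_0+d}>0$. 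Finally $M\in T$ is declared positive if $\delta(M)$ is positive in $D$, or else $\delta(M)=I$ and $M$ is positive in $U$.

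The verifications of trichotomy and of the semigroup property are then short computations. The crucial point for the unipotent order is that the lowest superdiagonal is multiplicative: if $S,S'$ have lowest superdiagonals $d_1,d_2$, the cross term $SS'$ only occupies superdiagonals $\ge d_1+d_2$, so on superdiagonal $d=\min(d_1,d_2)$ the product reads as $S+S'$, and a first nonzero entry that is a single positive or a sum of positives stays positive. I expect the main obstacle to be conjugation-invariance of the unipotent cone, which is exactly what makes the order two-sided. For $G\in T$ with diagonal $(g_i)$ and $M=I+S\in U$ positive with lowest superdiagonal $d$, an index count shows that on superdiagonal $d$ the conjugate $GSG^{-1}$ has $(i,i+d)$ entry exactly $g_i\,S_{i,i+d}\,g_{i+d}^{-1}$, since $i\le k\le l\le i+d$ together with $l-k\ge d$ forces $k=i$, $l=i+d$. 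Hence the lowest superdiagonal, the position of its first nonzero entry, and (because $g_{i_0}>0$, $g_{i_0+d}^{-1}>0$, and conjugation by positives preserves positivity) its sign are all unchanged, so $GMG^{-1}$ stays positive. The corresponding invariance on $D$ reduces to conjugation preserving $g-1>0$, already noted above. Assembling these gives a two-sided order on $T$, proving it orderable.
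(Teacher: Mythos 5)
Your proposal is correct, and the order it produces is in fact the same as the paper's, but you arrive at it by a genuinely different and more structured route. The paper defines $X>Y$ directly: it well-orders the positions on or above the main diagonal --- diagonal positions first (by row), then off-diagonal positions sorted by superdiagonal $k-i$ and, within a superdiagonal, by row; this is exactly what its notion of \emph{preceding entries} encodes --- and declares $X>Y$ when, at the first position where the two matrices differ, the entry of $X$ is larger; every verification of Definition~\ref{def1} is then left to the reader. You instead build a conjugation-invariant positive cone along the extension $1\to U\to T\to D\to 1$ given by the diagonal homomorphism $\delta$, order $D$ by first diagonal difference and $U$ by the first nonzero entry of $M-I$ in the same (superdiagonal, row) well-order, and set $X>Y$ when $XY^{-1}$ lies in the cone. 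One can check these relations coincide: on the diagonal part, comparing $g_k h_k^{-1}$ with $1$ is equivalent to comparing $g_k$ with $h_k$ because right multiplication by the positive elements $h_k^{\pm 1}$ preserves the ring order, and when diagonals agree, $XY^{-1}-I=(X-Y)Y^{-1}$ has its first nonzero entry at the first position where $X$ and $Y$ differ, with the same sign. What your packaging buys is precisely what the paper's ``one easily checks'' conceals: two-sided invariance is the only nontrivial condition, and your two index counts --- that the cross term $SS'$ sits on superdiagonals at least $d_1+d_2$, and that conjugation by $G$ replaces each lowest-superdiagonal entry $S_{i,i+d}$ by $g_i S_{i,i+d}\, g_{i+d}^{-1}$, preserving positions and signs --- are exactly that check, carried out; the cone formalism then converts conjugation invariance into invariance under all two-sided translations in one line. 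One small point you should argue rather than assert: $a>0$ invertible implies $a^{-1}>0$ rests on $1>0$, which itself follows from the ordered-ring axiom (if $0>1$ then $-1>0$, and applying the axiom with $a=-1>0$ to $-1>0$ gives $1=(-1)(-1)>(-1)\cdot 0=0$, a contradiction); with that supplied, your argument is complete.
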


\begin{proof}
Triangular matrices of the form described in the statement clearly form a group. Let $X$ and $Y$ be such matrices. We will call \textit{preceding entries} to a given entry $x_{ik}$,  those\footnote{Translators' note: we believe that there is a mistake here, $x_{nm}$ should probably be replaced with $x_{mn}$.} $x_{nm}$ located to the right of or on the main diagonal, for which \[
\begin{array}{lccl}
n-m \leq k-i &\text{ when }& m<i ,&\text{ and }\\n-m < k-i &\text{ when }& m\geq i. 
\end{array}\]

Say that $X>Y$ if either of the following conditions holds:
\begin{itemize}
\item $x_{ii}=y_{ii}$ for $i=1,\ldots,k-1$, and $x_{kk}>y_{kk}$ for some $k$,
\item $x_{ik}>y_{ik}$ for some $k>i$, and their preceding entries coincide.
\end{itemize}
One easily checks that the conditions of Definition~\ref{def1} are satisfied.
\end{proof}

\begin{lemma}\label{lem3}
The direct product of two ordered groups is orderable.
 \end{lemma}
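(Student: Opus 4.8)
The plan is to equip the direct product $\G \times \K$ of two ordered groups with the lexicographic order that gives priority to the first factor. Concretely, for elements $(g,h)$ and $(g',h')$ I would declare $(g,h) > (g',h')$ precisely when either $g > g'$ in $\G$, or else $g = g'$ and $h > h'$ in $\K$. The task then reduces to verifying the three conditions of Definition~\ref{def1} for this relation.

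First I would check trichotomy. Given two pairs, I compare their first coordinates using trichotomy in $\G$: if $g \neq g'$ the order of the pairs is already decided by $\G$, and if $g = g'$ I fall back on trichotomy in $\K$ applied to the second coordinates. Exactly one of the three alternatives occurs in each case. Transitivity is then a short case analysis: from $(g,h) > (g',h')$ and $(g',h') > (g'',h'')$ one splits according to whether the first coordinates are equal or strictly decreasing along the chain, and in every case transitivity in $\G$, or in $\K$ when all first coordinates coincide, yields $(g,h) > (g'',h'')$.

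The only condition needing a word is the two-sided compatibility. Since multiplication in $\G \times \K$ is coordinatewise, for any $(a_1,a_2)$ and $(b_1,b_2)$ one has $(a_1,a_2)(g,h)(b_1,b_2) = (a_1 g b_1, a_2 h b_2)$. If $(g,h) > (g',h')$ because $g > g'$, then $a_1 g b_1 > a_1 g' b_1$ by compatibility in $\G$, so the first coordinate already decides the comparison in favour of the left-hand product, whatever the second coordinates do. If instead $g = g'$ and $h > h'$, then $a_1 g b_1 = a_1 g' b_1$ while $a_2 h b_2 > a_2 h' b_2$ by compatibility in $\K$, so again the product on the left is the larger one. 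Hence condition~(3) holds.

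I do not expect a genuine obstacle here: the lexicographic construction is essentially forced and the verifications are mechanical. The only point requiring mild care is to keep the priority of the two factors consistent throughout the trichotomy and transitivity arguments, and to invoke the two-sided compatibility in each factor \emph{separately}, without assuming anything about how the orders on $\G$ and $\K$ interact, since in general they need not interact at all.
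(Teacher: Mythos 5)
Your proof is correct and follows exactly the paper's approach: the lexicographic order on $\G\times\K$ giving priority to the first factor. The paper states this order and leaves the verification of Definition~\ref{def1} to the reader, while you carry out the (routine) checks explicitly; there is nothing to add.
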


\begin{proof}
Let $\A$ and $\B$ be ordered groups. Say that $(a,b)>(a^\prime, b^\prime)$ in $\A\times\B$ if either $a>a^\prime$, or $a=a^\prime$ and $b>b^\prime$. It is easy to check that the conditions from Definition~\ref{def1} hold.\end{proof}
We denote by $\M$ the direct product of two ordered groups $\A$ and $\B$. A pair of the form $(a, e_1)$ where $e_1$ is the identity of $\B$ will be denoted simply by $a$, and a pair of the form $(e, b)$ where $e$ is the identity of $\A$ will be denoted by $b$.

Consider now the following transcendental triangular matrix:

\[X=\tri{x_{12}}{x_{13}}{x_{14}}{x_{23}}{x_{24}}{x_{34}}\]

We denote by $\G$ the free abelian group generated by the entries $x_{ij}$ of $X$. This group is orderable (see \cite{Shim} and references therein).
By Lemma~\ref{lem1}, the group algebra $\K=\Q\G$ is orderable, and thus has no zero divisors. The field of fractions $\Frac(\K)$ of this algebra is also orderable~\cite{VdW}.
Consider the group algebra $\L=\Frac(\K)\M$, where $\M=\A\times\B$ as above. According to Lemmas~\ref{lem1} and~\ref{lem3}, the algebra $\L$ is orderable.

\begin{lemma}\label{lem4}
Consider the diagonal matrix \[A=\diag{1}{a}{1}{a}\] where $1$ is the unit of $\L$ and $a\in \L$ is neither $0$ nor $1$. Then every entry of the matrix $B=X^{-1}AX$ located to the right of or on the main diagonal is non-zero. \end{lemma}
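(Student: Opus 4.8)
The plan is to compute the $(i,k)$-entry of $B=X^{-1}AX$ directly and exploit the fact that conjugation merely redistributes the two scalars $1$ and $a$ sitting on the diagonal of $A$. Write $Y=X^{-1}$, which is again triangular with $1$'s on the diagonal and whose entries are $\Z$-polynomials in the $x_{pq}$, hence lie in $\K$; and set $d_j=A_{jj}$, so that $d_j=1$ for odd $j$ and $d_j=a$ for even $j$. Triangularity gives the finite sum
\[
B_{ik}=\sum_{j=i}^{k} Y_{ij}\,d_j\,X_{jk},
\]
where $X_{jk}=x_{jk}$ for $j<k$ and $X_{jj}=1$. The diagonal case is immediate: $B_{ii}=d_i\in\{1,a\}$, which is non-zero since $a\neq0$. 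From now on I fix $i<k$.

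The key observation is that the scalar entries $Y_{ij},X_{jk}\in\K\subseteq\Frac(\K)$ are central in $\L$, so they commute with each $d_j$. Collecting the terms according to whether $d_j$ equals $1$ or $a$, I can write $B_{ik}=P\cdot 1+Q\cdot a$ with
\[
P=\sum_{\substack{i\le j\le k\\ j\ \mathrm{odd}}}Y_{ij}X_{jk},\qquad Q=\sum_{\substack{i\le j\le k\\ j\ \mathrm{even}}}Y_{ij}X_{jk},
\]
both elements of $\K$. Now comes the decisive simplification: since $Y=X^{-1}$, the \emph{full} sum is $P+Q=\sum_{j}Y_{ij}X_{jk}=(X^{-1}X)_{ik}=\delta_{ik}=0$, because $i<k$. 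Hence $Q=-P$, and therefore
\[
B_{ik}=P\cdot 1-P\cdot a=P(1-a).
\]
Because $\L$ is orderable (Lemmas~\ref{lem1} and~\ref{lem3}) it has no zero divisors, and $1-a\neq0$ as $a\neq1$; so proving $B_{ik}\neq0$ reduces to showing that the polynomial $P\in\K$ is non-zero.

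To see $P\neq0$ I would isolate a single monomial whose coefficient in $P$ cannot vanish, the natural candidate being the ``maximal chain'' monomial $\nu=x_{i,i+1}x_{i+1,i+2}\cdots x_{k-1,k}$. Using the standard expansion $Y_{pq}=\sum_{p=j_0<\cdots<j_m=q}(-1)^m x_{j_0j_1}\cdots x_{j_{m-1}j_m}$ for the inverse of a triangular matrix with unit diagonal, one checks that $\nu$ can arise in $Y_{ij}X_{jk}$ only for $j=k$ (from the full chain in $Y_{ik}$, with sign $(-1)^{k-i}$) and for $j=k-1$ (from the full chain in $Y_{i,k-1}$ times $x_{k-1,k}$, with sign $(-1)^{k-1-i}$); for every other $j<k$ the spanning factor $x_{jk}$ has span at least $2$ and so cannot be one of the length-one steps of $\nu$. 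Exactly one of the consecutive indices $k-1,k$ is odd, so exactly one of these two contributions survives in $P$, and its coefficient is $\pm1$. Since distinct monomials form a basis of $\K=\Q\G$ over $\Q$, this forces $P\neq0$, whence $B_{ik}=P(1-a)\neq0$.

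I expect the only genuine obstacle to be the bookkeeping in this last step: correctly identifying which $j$ contribute the monomial $\nu$ and tracking the signs coming from the inverse expansion. The telescoping identity $P+Q=(X^{-1}X)_{ik}=0$ is what makes the argument clean and, pleasantly, handles an \emph{arbitrary} $a\neq0,1$ through the no-zero-divisor property rather than through any linear independence of $1$ and $a$; the residual computation is just verifying that the coefficient of $\nu$ in $P$ is a non-zero integer.
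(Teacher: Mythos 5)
Your proof is correct, and although it begins exactly where the paper does---expanding $b_{ik}=\sum_j y_{ij}d_jx_{jk}$ and splitting it into the odd-index part $P$ (the coefficient of $1$) and the even-index part $Q$ (the coefficient of $a$), with the chain expansion of the unitriangular inverse doing the combinatorial work---its decisive steps are genuinely different. The paper expands $P$ in full and concludes that it is non-zero, in effect because distinct chains yield distinct monomials of $\G$ so the signed sum cannot collapse, and then infers $b_{ik}\neq 0$ directly from ``the coefficient of $1$ is non-zero.'' That last inference tacitly assumes that $1$ and $a$ are linearly independent over $\Frac(\K)$, i.e., essentially that $a$ is a group element of $\M$ distinct from the identity; this is true in the only case the Theorem needs, but not for an arbitrary $a\in\L$ with $a\neq 0,1$ as the lemma literally states. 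Your telescoping identity $P+Q=(X^{-1}X)_{ik}=0$ appears nowhere in the paper, and it is exactly what repairs this: it gives $b_{ik}=P(1-a)$, so non-vanishing follows from $P\neq 0$, $1-a\neq 0$, and the absence of zero divisors in the orderable algebra $\L$ (a fact the paper itself records in setting up $\Frac(\K)$), with no independence assumption on $a$. Your non-vanishing argument for $P$ is also leaner: instead of exhibiting the whole expansion, you isolate the single maximal-chain monomial $x_{i,i+1}x_{i+1,i+2}\cdots x_{k-1,k}$, check that only $j=k-1$ and $j=k$ can produce it and that exactly one of these consecutive indices is odd, so its coefficient in $P$ is $\pm 1$. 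The trade-off is that the paper's version displays the polynomial $P$ explicitly while yours certifies only one coefficient of it; but your argument proves the lemma in the generality in which it is stated, and the sign bookkeeping you flagged as the main risk is in fact consistent: the two contributions $(-1)^{k-i}$ and $(-1)^{k-1-i}$ are opposite, exactly as they must be to cancel in $P+Q=0$.
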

\begin{proof}
Put $X^{-1}=(y_{ik})$ and $B=(b_{ik})$. Clearly\footnote{ Translators' note: we corrected the last term of the formula given for $y_{in}$. Note also that this formula holds only for $i\neq n$, as $y_{ii}=1$. As a result, the very last formula of this proof is slightly incorrect when $i$ is odd, but the main point---that the coefficient of  $b_{ik}$ is not $0$---seems to hold true after all.},

\[ 
y_{in}=-x_{in}+\sum_{i<\alpha_1<n} x_{i \alpha_1}x_{ \alpha_1 n}- \sum_{i<\alpha_1<\alpha_2<n} x_{i \alpha_1}x_{ \alpha_1 \alpha_2}x_{ \alpha_2 n}+\cdots +
\]
\[ 
+(-1)^{n-i}x_{i,\, i+1}x_{i+1,\, i+2}\ldots x_{n-1, \, n}\]
and
\[ b_{ik}=1(y_{i1}x_{1k}+y_{i3}x_{3k}+\cdots+y_{i,\, 2l+1}x_{2l+1, \,k})\, +
\]
\[ a(y_{i2}x_{2k}+y_{i4}x_{4k}+\cdots+y_{i,\, 2r}x_{2r, \,k}
).\]

From this follows:
\[ y_{i1}x_{1k}+y_{i3}x_{3k}+\cdots+y_{i,\, 2l+1}x_{2l+1, \,k}\, = \medskip
\]
\[-\sum x_{in}x_{nk}+\sum\sum_{i<\alpha_1<n} x_{i \alpha_1}x_{ \alpha_1 n}x_{nk}- \sum\sum_{i<\alpha_1<\alpha_2<n} x_{i \alpha_1}x_{ \alpha_1 \alpha_2}x_{ \alpha_2 n}x_{nk}+\cdots, \]
where the external sums are over all odd integers $n$ between $i$ and $k$. This equality shows that the coefficient of $1$ in $b_{ik}$ is non-zero, and so $b_{ik}\neq 0$.
\end{proof}

\begin{thm}
The free product of two ordered groups can be endowed with a group order whose restriction to each factor is the original order.
\end{thm}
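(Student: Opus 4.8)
The plan is to realize $\A * \B$ as a subgroup of the orderable group of triangular matrices over $\L$, and then transport the order. Write $\mathfrak{T}$ for the group of triangular matrices over the ordered ring $\L$ whose diagonal entries are positive and invertible; by Lemma~\ref{lem2} this group is orderable. Recall that $\A$ and $\B$ sit inside $\L$ as the group elements $(a,e_1)$ and $(e,b)$ of $\M$, each of which is a unit of the group algebra and is positive in the order of $\L$. I would use these to build two order-preserving embeddings into $\mathfrak{T}$ and then invoke the universal property of the free product.

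First I would define the factor maps. Send $a\in\A$ to the diagonal matrix $\mathrm{diag}(1,a,1,a,\dots)$ and send $b\in\B$ to the conjugate $X^{-1}\,\mathrm{diag}(1,b,1,b,\dots)\,X$. Both are homomorphisms into $\mathfrak{T}$: the diagonal entries are the positive units $1,a$ (resp. $1,b$) of $\L$, and conjugation by the fixed unitriangular matrix $X$ preserves the diagonal, hence keeps the image in $\mathfrak{T}$. Each map is order-preserving and injective, because the order on $\mathfrak{T}$ compares diagonal entries first, and on the group elements $(a,e_1),(e,b)$ the order of $\L$ restricts to the original orders of $\A$ and $\B$ (the difference $(a,e_1)-(a',e_1)$ has leading coefficient $1>0$ in $\Frac(\K)$ exactly when $a>a'$). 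Crucially, although $(a,e_1)$ and $(e,b)$ commute in $\M$, the matrices $\mathrm{diag}(1,a,1,a,\dots)$ and $X^{-1}\,\mathrm{diag}(1,b,1,b,\dots)\,X$ do not: the transcendental matrix $X$ is precisely what breaks the commutativity of the direct product. By the universal property these two maps assemble into a single homomorphism $\phi:\A*\B\to\mathfrak{T}$.

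The main obstacle is to show that $\phi$ is faithful. A reduced word of length one is a single element of $\A$ or $\B$, and is handled by the injectivity of the factor maps; so let $w=c_1c_2\cdots c_n$ be a reduced word of length $n\ge 2$, alternating between non-identity elements of $\A$ and of $\B$. Then $\phi(w)$ is an alternating product of diagonal matrices $\mathrm{diag}(1,a,1,a,\dots)$ and dense upper-triangular matrices $X^{-1}\,\mathrm{diag}(1,b,1,b,\dots)\,X$. Here Lemma~\ref{lem4} is the engine: it shows that each such conjugate has \emph{every} on-or-above-diagonal entry non-zero, the coefficient of $1$ in each being a non-zero element of $\Frac(\K)$. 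I would then track a suitable entry of the product $\phi(w)$ and argue that it cannot vanish. The delicate point is non-cancellation: since the $x_{ij}$ generate a free abelian group, the monomials in them are $\Q$-linearly independent in $\K$, so the contributions of distinct $\B$-syllables---kept apart by the interposed non-trivial diagonal $\A$-matrices, whose entries $a_i\ne 1$ genuinely reorder the coefficients---cannot telescope to zero. This bookkeeping, organized along the alternation of the word, is where essentially all of the work lies.

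Once $\phi$ is injective, $\A*\B$ is isomorphic to a subgroup of the orderable group $\mathfrak{T}$, and every subgroup of an ordered group is ordered by restriction of the order. By construction the restriction of this order to $\A$ (resp. $\B$) is the one induced by the diagonal embedding, which we already identified with the original order of that factor. This produces the desired group order on $\A*\B$ extending both factor orders, completing the proof.
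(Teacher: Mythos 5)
Your high-level plan---Malcev's matrix method, Lemma~\ref{lem2} to order the triangular group, Lemma~\ref{lem4} for non-vanishing of entries, then restriction of the order to the image of $\A*\B$---is exactly the paper's, but you have discarded precisely the device on which the paper's faithfulness argument rests, and the replacement you sketch does not work. The paper does not send $\A$ to bare diagonal matrices and $\B$ to $X$-conjugates: it conjugates $\overline{\A}$ by $XU$ and $\overline{\B}$ by a \emph{different} transcendental matrix $Y$ followed by a transcendental diagonal matrix $V$. The matrices $U$ and $V$ have one job. When an entry of $R_1=\prod\bbar{A_i}\,\bbar{B_i}$ is expanded as a sum over paths $i\leq i_2\leq\cdots\leq i_{2n}\leq k$, each term acquires the monomial $u_i^{-1}u_{i_2}v_{i_2}^{-1}v_{i_3}u_{i_3}^{-1}u_{i_4}\cdots$, and distinct paths give distinct monomials in the free abelian generators; since each coefficient is a product of entries that Lemma~\ref{lem4} guarantees are non-zero, and $\L$ has no zero divisors, no two terms of the sum can interact and the entry cannot vanish. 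Faithfulness then requires no bookkeeping whatsoever.

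In your construction these tags are gone, and the argument you offer instead---``monomials in the $x_{ij}$ are $\Q$-linearly independent, so contributions of distinct $\B$-syllables cannot telescope to zero''---is unsound, because distinct paths contribute the \emph{same} monomials in the $x_{ij}$, with group-algebra coefficients of both signs. Concretely, write $D_b$ for the diagonal matrix $\mathrm{diag}(1,b,1,b,\dots)$ and $C_b=X^{-1}D_bX$; a direct computation gives
\[
(C_b)_{12}=x_{12}(1-b),\qquad (C_b)_{23}=x_{23}(b-1),\qquad (C_b)_{13}=x_{12}x_{23}(1-b),
\]
where in the last entry the $x_{13}$-terms have already cancelled \emph{inside a single conjugate}. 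Consequently, in the $(1,3)$ entry of $C_{b_1}D_aC_{b_2}$ all three paths contribute the same monomial $x_{12}x_{23}$, multiplied respectively by $(1-b_2)$, $a(1-b_1)(b_2-1)$, and $(1-b_1)$, so whether the entry vanishes is a question about cancellation among group elements of $\M$ in $\Frac(\K)\M$, about which independence of monomials in the $x_{ij}$ says nothing. (Here the entry survives because the coefficient of the identity of $\M$ is $2$; but for longer words subproducts of the $a_i$ can themselves collapse to the identity, so the signs can genuinely collide, and one would need an argument such as extracting an extremal group element.) This is exactly the step you defer as ``bookkeeping, where essentially all of the work lies''---it is not bookkeeping, it is the theorem, and the paper's matrices $Y$, $U$, $V$ exist precisely to make it trivial. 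As written your proof has a genuine gap at the faithfulness step; either supply the group-algebra cancellation analysis for your simplified representation, or reinstate the auxiliary transcendental matrices as the paper does.
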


\begin{proof}
Consider, together with the triangular matrix $X$ introduced before, the following transcendental triangular matrices:\medskip
\[Y=\tri{y_{12}}{y_{13}}{y_{14}}{y_{23}}{y_{24}}{y_{34}},\]
\medskip
\[U=\dia{u_{1}}{u_{2}}{u_3},\qquad V=\dia{v_{1}}{v_{2}}{v_3}.\]

Let $\A$ and $\B$ be ordered groups. As before, we construct an algebra $\L=\Frac(\Q\G)\M$ with $\M=\A\times\B$, where now the free abelian group $\G$ is generated by the set of all formal entries not only of $X$, but also of $Y$, $U$, and $V$. To every  $a=(a,e_1)\in \M$ we associate the diagonal matrix 
\[\overline{A_a}=\diag{1}{a}{1}{a},\]
and to every  $b=(e, b)\in \M$ the diagonal matrix 
\[\overline{B_b}=\diag{1}{b}{1}{b}.\]
Clearly the two sets of matrices $\left.\overline{\A}=\left\lbrace \overline{A_a}\,\,\right| a\in \A \right\rbrace$ and $\left.\overline{\B}=\left\lbrace \overline{B_b}\,\,\right| b\in \B \right\rbrace$ form groups naturally isomorphic to $\A$ and $\B$ respectively.

Put $\bbar{\A}=U^{-1}X^{-1}\overline{\A}XU$ and  $\bbar{\B}=V^{-1}Y^{-1}\overline{\B}YV$. We are going to show that the representations of $\A$ and $\B$ given by $a\mapsto \bbar{A_a}$ and $b\mapsto \bbar{B_b}$ induce a faithful representation of the free product $\A\ast \B$, that is, given elements of $\A\ast \B$ of type
\[r_1=\prod_1^n a_ib_i,\qquad 
r_2=\left(\prod_1^n a_ib_i\right)a_k,\qquad 
r_3=b_k\prod_1^n a_ib_i,\qquad 
r_4=\prod_1^n b_ia_i,\qquad \]
the corresponding matrices
\[R_1=\prod_1^n \bbar{A_i}\,\bbar{B_i},\quad 
R_2=\left(\prod_1^n \bbar{A_i}\,\bbar{B_i}\right)\bbar{A_k},\quad 
R_3=\bbar{B_k}\prod_1^n \bbar{A_i}\,\bbar{B_i},\quad 
R_4=\prod_1^n \bbar{B_i}\,\bbar{A_i}\]
are not the identity matrix. We will write down the proof for $R_1$ only, as the three remaining cases are similar.

Every entry $\bbar{a}_{kl}^{\,i}$ of the matrix $\bbar{A_i}$ is equal to $u_k^{-1}a_{kl}^{\prime\,i}u_l$, where $a_{kl}^{\prime\,i}$ is an entry of $A_i^\prime = X^{-1}\overline{A_i}X$, and $u_k^{-1}$ and $u_l$ are diagonal entries of the matrices $U^{-1}$ and $U$. Similarly, $\bbar{b}_{kl}^{\,i}=v_k^{-1}b_{kl}^{\prime\,i}v_l$, where $b_{kl}^{\prime\,i}$ is an entry of $B_i^\prime = X^{-1}\overline{B_i}X$, and $v_k^{-1}$ and $v_l$ are diagonal entries of the matrices $V^{-1}$ and $V$.

By Lemma~\ref{lem4}, every matrix in the groups $\A^\prime=X^{-1}\overline{\A}X$ and $\B^\prime=Y^{-1}\overline{\B}Y$ different from the identity matrix has only non-zero entries to the right of or on the main diagonal. The entries of the matrix $R_1$ are given by \[r_{ik}=\sum_{i\leq i_2\leq i_3\leq \ldots \leq i_{2n}\leq k} \bbar{ a}_{ii_2}^{(1)}\bbar{b}_{i_2i_3}^{(1)} \bbar{a}_{i_3i_4}^{(2)}\bbar{b}_{i_4i_5}^{(2)}\cdots \bbar{a}_{i_{2n-1}, i_{2n}}^{(n)}\bbar{b}_{i_{2n}, k}^{(n)}.\]
Here $i\leq k$. This sum can be regarded as a polynomial in the diagonal entries of $U$, $V$ and of their inverses. The coefficients of this polynomial are products of entries of the matrices $A^\prime_1, B^\prime_1, A^\prime_2, B^\prime_2, \ldots$. Observe that no monomial occurs twice in the sum as it is given. Moreover, every coefficient is non-zero, since it is a product of non-zero elements of the algebra $\L$, which has no zero divisors.

Therefore, we have a faithful representation of the free product $\A\ast\B$, given by \[r_i\mapsto R_i.\]

Every diagonal entry of $R_i$ is either the unit of $\L$ or a positive invertible element of $\L$ distinct from the unit. It follows then from Lemma~\ref{lem2} that all matrices of all four types $R_i$ together form an orderable group. Therefore, the free product $\A\ast\B$ is orderable.
\end{proof}

The proof presented here for two factors obviously works for any number of factors.

\bibliographystyle{abbrv}
\bibliography{refs}
\end{document}